\title{On the Hodge-Newton filtration for $p$-divisible $\mathcal{O}$-modules}
\author{Elena Mantovan}
\author{Eva Viehmann}
\address{Department of Mathematics, Caltech, Pasadena, CA 91125, USA}
\address{Mathematisches Institut der Universit\"{a}t Bonn, Beringstra\ss e 1, 53115 Bonn, Germany}
\newcommand{\BT}{$p$-divisible }
\newcommand{\NP}{Newton polygon }
\newcommand{\HP}{Hodge polygon }
\newcommand{\SHP}{$\sigma$-invariant Hodge polygon }
\newcommand{\NPs}{Newton polygons }
\newcommand{\RZ}{Rapoport-Zink }
\newcommand{\HN}{Hodge-Newton }
\newcommand{\Spec}{{\rm Spec} }
\newcommand{\Spf}{{\rm Spf} }
\newcommand{\End}{{\rm End} }
\newcommand{\MM}{{\mathcal M}}
\newcommand{\zz}{\mathbb Z}
\newcommand{\La}{\Lambda}
\newcommand{\hh}{\mathcal H}
\newcommand{\oo}{\mathcal O}
\newcommand{\ra}{\rightarrow}
\newcommand{\uh}{{(H,\iota)}}
\newcommand{\Res}{{\rm Res}}
\newcommand{\Fil}{{\rm Fil}}
\newcommand{\unu}{{\overline{\mu}}}
\newcommand{\Aa}{\mathfrak{a}}
\newtheorem{thm}{Theorem}
\newtheorem{thm*}{Theorem}
\newtheorem{defn}[thm]{Definition}
\newtheorem{defn*}[thm*]{Definition}
\newtheorem{lemma}[thm]{Lemma}
\newtheorem{lemma*}[thm*]{Lemma}
\newtheorem{cor}[thm]{Corollary}
 \theoremstyle{remark}
\newtheorem{remark}[thm]{Remark}
\begin{document}

\begin{abstract}
The notions \HN decomposition and \HN filtration for $F$-crystals
are due to Katz and generalize Messing's result on
the existence of the local-\'etale filtration for \BT groups.
Recently, some of Katz's classical results have been generalized
by Kottwitz to the context of $F$-crystals with additional structures
and by Moonen to $\mu$-ordinary \BT  groups.

In this paper, we discuss further generalizations to the situation
of crystals in characteristic $p$ and of \BT groups with additional structure by endomorphisms.
\end{abstract}
\maketitle

\section{Introduction}
This paper is concerned with crystals in characteristic $p$ and with
\BT groups with given endomorphisms. We begin by defining these
notions. Let $p$ be a prime. Let $B$ be a split unramified
finite-dimensional semi-simple algebra over $\mathbb{Q}_p$ and let
$\oo_B$ be a maximal order of $B$. Let $A$ be a noetherian ring and
a formally smooth $\mathbb{F}_p$-algebra. Assume that there is an
ideal $\mathfrak{I}$ of $A$ such that $A$ is $\mathfrak{I}$-adically
complete and such that $A/\mathfrak{I}$ is a finitely generated
algebra over a field with a finite $p$-basis. In \cite{dJ}, 2.2.3,
de Jong shows that there is an equivalence of categories between
crystals over a formal scheme $S=\Spf (A)$ as above and the category
of locally free $\tilde{A}$-modules together with a connection. Here
$\tilde{A}$ denotes a lift of $A$ over $\mathbb{Z}_p$ in the sense
of \cite{dJ}, 1.2.2 and 1.3.3, i.e. a $p$-adically complete flat
$\mathbb{Z}_p$-algebra endowed with an isomorphism
$\tilde{A}/p\tilde{A}\cong A$.  We write $\sigma$ for a lift of
Frobenius on $\tilde{A}$.  In the following definition we implicitly
use this equivalence but consider an additional $\oo_B$-action.

\begin{defn}\label{def1}
Let $S$ be as above and $a\in \mathbb{N}\setminus \{0\}$. A
$\sigma^a$-$F$-crystal of $\oo_B$-modules is a $4-$tuple
$(M,\nabla,F,\iota)$ consisting of
\begin{itemize}
\item a locally free $\tilde{A}$-module $M$ of finite rank,
\item an integrable nilpotent $W(k)$-connection $\nabla$,
\item a horizontal morphism $F:(\sigma^a)^*(M,\nabla)\rightarrow (M,\nabla)$ which induces an isomorphism after inverting $p$ and
\item a faithful action $\iota:\oo_B\rightarrow \End(M)$ commuting with $\nabla$ and $\sigma^a$-commuting with $F$.
\end{itemize}
\end{defn}

Note that when $A$ is a perfect field, the connection can be reconstructed from $(M,F)$, and thus will often be omitted from the notation. An important class of examples of $\sigma$-$F$-crystals of $\oo_B$-modules is provided by the crystals associated via (contravariant) Dieudonn\'{e} theory to $p$-divisible groups with endomorphisms. Let $S$ be a formal scheme which is locally of the form $\Spf(A)$ for $A$ as above. Then by \cite{dJ}, Main Theorem 1, the crystalline Dieudonn\'{e} functor is an equivalence of categories between \BT groups over $S$ and their Dieudonn\'{e} crystals. For later use we define \BT $\oo_B$-modules in a slightly more general context.
\begin{defn}
For any $\mathbb{Z}_p$-scheme $S$, by a \BT $\oo_B$-module $\uh$ over $S$ we mean a \BT group $H$ over
$S$, together with a faithful action $\iota:\oo_B\hookrightarrow
{\rm End }(H)$.
\end{defn}

\begin{remark}
A standard reduction argument shows that to prove any of our results on $\sigma$-$F$-crystals of $\oo_B$-modules or on \BT $\oo_B$-modules, we may assume that $B$ is
simple, i.e.~a matrix algebra over an unramified extension $D$ of
$\mathbb{Q}_p$ (compare \cite{RZ}, Section 3.23(b)). Then by Morita
equivalence we may further assume $B=D$.
\end{remark}

We now define Newton and Hodge polygons of $\sigma^a$-$F$-crystals of $\oo_B$-modules in the case that $A=k$ is an algebraically closed field of positive characteristic
$p$. In this case $\tilde{A}=W(k)$. Let $(M,\nabla,F,\iota)$ be a $\sigma^a$-$F$-crystal of $\oo_B$-modules over $\Spec(k)$ where $M$ is a $W(k)$-module of rank $h$. The \NP $\nu$ of $(M,\nabla,F,\iota)$ is defined as the \NP of the map
$F$. It is an element of $(\mathbb{Q}_{\geq 0})^h_+$, i.e. an
$h$-tuple of nonnegative rational numbers that are ordered
increasingly by size. The notion polygon is used as one often considers the polygon associated to $\nu=(\nu_i)$ which is the graph of the continuous, piecewise linear function $[0,h]\rightarrow \mathbb{R}$ mapping $0$ to $0$ and with slope $\nu_i$ on $[i-1,i]$. We will refer to this polygon by the same letter $\nu$, for example when we talk about points lying on $\nu$, or about the slopes of $\nu$. If $(M,\nabla,F,\iota)$ is the $\sigma$-$F$-crystal of $\oo_B$-modules associated to a \BT $\oo_B$-module $(H,\iota)$ over $k$ then $\nu_i\in[0,1]$ for all $i$. Note that the isogeny class of $H$ is uniquely
determined by $\nu$.

A second invariant of $(M,\nabla,F,\iota)$ is its Hodge polygon
$\mu\in \mathbb{N}^h_+$. It is given by the condition that the relative
position of $M$ and $F(M)$ is $\mu$. Then $\nu\preceq\mu$ with respect to the
usual order, i.e. if we denote the entries of $\nu$ and $\mu$ by
$\nu_i$ and $\mu_i$, then $\sum_{i=1}^l (\nu_i-\mu_i)\geq 0$ for all
$l\leq h$ with equality for $l=h$. If $(M,\nabla,F,\iota)$ is the Dieudonn\'{e} module of some $p$-divisible $\oo_B$-module, then the entries of $\mu$ are all $0$ or $1$.

For a $\sigma^a$-$F$-crystal of $\oo_B$-modules $(M,\nabla,F,\iota)$ over an algebraically closed field $k$, we analyse how the Galois group ${\rm
Gal}(\mathbb{F}_q\mid\mathbb{F}_p)$ (where $\mathbb{F}_q$ with
$q=p^r$ is the residue field of $B$) permutes the entries of $\nu$ and $\mu$. In Section \ref{sec2} we show that the entries of the \NP are fixed
by this action, however in general the entries of the \HP are not. We call
$$\overline{\mu}=\frac{1}{r}\sum_{i=0}^{r-1} \sigma^i(\mu)$$ the
{\em $\sigma$-invariant Hodge polygon} of $(M,\nabla,F,\iota)$. Here $\sigma$ is the Frobenius. Then we
also have $\nu\preceq\unu$.

If $(M,\nabla,F,\iota)$ is a $\sigma^a$-$F$-crystal of
$\oo_B$-modules over a scheme $S$, the Newton polygon (resp. Hodge
polygon, $\sigma$-invariant Hodge polygon) of $(M,\nabla,F,\iota)$
is defined to be the function assigning to each geometric point $s$
of $S$ the Newton polygon (resp. Hodge polygon, $\sigma$-invariant
Hodge polygon) of the fiber of $(M,\nabla,F,\iota)$ at $s$.

In the literature the $\sigma$-invariant \HP of a $p$-divisible $\oo_B$-module is often called
the $\mu$-ordinary polygon, because of its analogy with the ordinary
polygon in the classical context (e.g. see \cite{Wedhorn}, 2.3).
Indeed, for \BT $\oo_B$-modules $\unu$ can be defined as follows. For a given $\mu\in
\{0,1\}^h_+$, one can consider the set $X$ of all isogeny classes of \BT
$\oo_B$-modules with \HP equal to $\mu$. Their \NPs all lie above
$\mu$ and share the same start and end points. Thus, there is a
partial order on $X$ given by the natural partial order $\preceq$ on
the set of Newton polygons. The $\mu$-ordinary polygon $\unu$ is the unique
maximal element in $X$. In the classical case without endomorphisms the
$\mu$-ordinary polygon coincides with the Hodge polygon $\mu$.

\begin{defn} \label{def2}
Let $(M,\nabla,F,\iota)$ be a $\sigma^a$-$F$-crystal of $\oo_B$-modules over $S$. Let $(x_1,x_2)=x\in\mathbb{Z}^2$ be a point which lies on the Newton
polygon $\nu$ of $(M,\nabla,F,\iota)$ at every geometric point of
$S$. Let $\nu_1$ denote the polygon consisting of the first $x_1$
slopes of $\nu$ and $\nu_2$ the polygon consisting of the remaining
ones. Let $\unu_1$ and $\unu_2$ be the analogous parts of the $\sigma$-invariant Hodge polygon.
\begin{enumerate}
\item $x$ is called a breakpoint of $\nu$ if the slopes of $\nu_1$ are strictly smaller than the slopes
of $\nu_2$.
\item We say that $(M,\nabla,F,\iota)$ has a \HN decomposition at $x$ if there are $\sigma^a$-$F$-subcrystals of $\oo_B$-modules
$(M_1,\nabla|_{M_1},F|_{M_1},\iota|_{M_1})$ and $(M_2,\nabla|_{M_2}, F|_{M_2}$, $\iota|_{M_2})$ of  $(M,\nabla,F,\iota)$ with $M_1\oplus M_2=M$ and such that the Newton polygon of $M_i$ is $\nu_i$ and its \SHP is $\unu_i$.
\item We say that $(M,\nabla,F,\iota)$ has a \HN filtration in $x$ if there is a $\sigma^a$-$F$-subcrystal of $\oo_B$-modules
$(M_1,\nabla|_{M_1},F|_{M_1},\iota|_{M_1})$ of $(M,\nabla,F,\iota)$ with Newton polygon $\nu_1$ and \SHP $\unu_1$ and such that
$M/M_1$ is a $\sigma^a$-$F$-crystal of $\oo_B$-modules with Newton polygon $\nu_2$ and \SHP $\unu_2$.
\end{enumerate}
We will use the analogous notions for \BT $\oo_B$-modules, also more generally over $\mathbb{Z}_p$-schemes.
\end{defn}

\begin{remark}\label{rem1}
Note that the existence of a \HN decomposition or filtration in some $x$ as in the previous definition immediately implies that $x$ lies on $\unu$ at every geometric point of $S$.
\end{remark}

In this paper we give conditions for the existence of a \HN decomposition over a perfect
field of characteristic $p$, a \HN filtration for families in
characteristic $p$, and a \HN filtration for deformations of \BT $\oo_B$-modules to
characteristic 0. First instances of these results can be found in
Messing's thesis \cite{ME} in the classical context of \BT groups
and in Katz's paper \cite{KZ} for $F$-crystals in positive
characteristics. Let us now explain the results in more detail.

\subsection{The \HN decomposition for $\oo_B$-modules}
Our result on the \HN decomposition for \BT $\oo_B$-modules over a
perfect field of characteristic $p$ is closely related to a more
general result for $F$-crystals with additional structures. The
generalization of Katz's result (which deals with the cases of
$GL_n$ and $GSp_{2n}$) to the context of unramified reductive groups is
due to Kottwitz, in \cite{KO5}. (To be precise the
condition required in \cite{KO5} is actually stronger than the one we give,
on the other hand Kottwitz's argument for its sufficiency applies almost
unchanged to our settings.)

The following notations are the same as in loc. cit.

Let $F$ be a finite unramified extension of $\mathbb{Q}_p$ or
$\mathbb{F}_p((t))$ and $\varepsilon\in\{p,t\}$ a uniformizer. Let
$L$ be the completion of the maximal unramified extension of $F$ in
some algebraic closure. We write $\mathcal{O}_F$ and $\mathcal{O}_L$
for the valuation rings, and $\sigma$ for the Frobenius of $L$ over
$F$.

Let $G$ be an unramified reductive group over $\mathcal{O}_F$ and
$T$ a maximal torus of $G$ over $\mathcal{O}_F$. Let $P_0$ be a
Borel subgroup containing $T$ with unipotent radical $U$.

Let $P=MN$ be a parabolic subgroup of $G$ containing $P_0$ with Levi
component $M$ containing $T$ and assume that all of these groups are
defined over $\mathcal{O}_F$. Let $A_P$ be the maximal split torus
in the center of $M$ and write
$\Aa_P=X_*(A_P)\otimes_{\zz}\mathbb{R}$. For $P=P_0$ we skip the
index $P$. Let $X_M$ be the quotient of $X_*(T)$ by the coroot
lattice for $M$. Then the Frobenius $\sigma$ acts on $X_M$ and we
denote by $Y_M$ the $\sigma$-coinvariants. We identify
$Y_M\otimes_{\zz}\mathbb{R}$ with $\Aa_P$. Let $Y_M^+$ be the subset of $Y_M$
of elements identified with elements $x\in\Aa_P$ with $\langle
x,\alpha\rangle>0$ for each root $\alpha$ of $A_P$ in $N$.

In \cite{KO4}, Kottwitz defines a morphism $w_G:G(L)\rightarrow X_G$, which induces a map $\kappa_G$
assigning to each $\sigma$-conjugacy class of elements of $G(L)$ an element of $Y_G$. The classification
of $\sigma$-conjugacy classes in \cite{KO3} and \cite{KO4} shows that a $\sigma$-conjugacy class of some
$b\in G(L)$ is determined by $\kappa_G(b)$ and the Newton point $\nu$ of $b$, a dominant element of
$X_*(A)_{\mathbb{Q}}=X_*(A)\otimes_{\mathbb{Z}}\mathbb{Q}$. For two elements $\nu_1,\nu_2\in X_*(A)_{\mathbb{Q}}$ we write $\nu_1\preceq_G \nu_2$ if $\nu_2-\nu_1$ is a non-negative rational linear combination of positive coroots of $G$.

Let $K=G(\oo_L)$. Let $b\in G(L)$ and $\mu\in X_*(T)$ dominant. The
{\em affine Deligne-Lusztig set associated to $b$ and $\mu$} is the
set
$$X_{\mu}(b)=\{g\in G(L)/K\mid g^{-1}b\sigma(g)\in K\varepsilon^{\mu}K\}.$$

In the function field case, the affine Deligne-Lusztig set is the
set of $\overline{\mathbb{F}}_p$-valued points of a subscheme of the
affine Grassmannian. In the case of mixed characteristic, a scheme
structure is not known in general. However, for $G$ the restriction of
scalars of some $GL_n$ or $GSp_{2n}$ and  $\mu$ minuscule,
$X_\mu(b)$ is in bijection with the $\overline{\mathbb{F}}_p$-valued
points of a moduli space of \BT $\oo_B$-modules constructed by
Rapoport and Zink, \cite{RZ}. In the case of mixed
characteristic, for $G$ the restriction of scalars of some $GL_n$ or
$GSp_{2n}$ (and any choice of $b,\mu$), to every element $g\in
X_\mu(b)$ we can associate a crystal with given endomorphisms and/or
polarization. Let $G$ be the restriction of scalars of $GL_B(V)$ or
$GSp_B(V)$, for $B$ an unramified finite extension of $F$  and $V$ a
finite dimensional $B$-vector space. Let $\oo_B$ be the valuation
ring of $B$. We choose an $\oo_B$-lattice $\Lambda$ in $V$. In the symplectic case, we assume
$B$ is endowed with an involution $*$ of the first kind which preserves
$\oo_B$, and $V$ with a non-degenerate alternating $*$-hermitian
form with respect to which $\La$ is self\-dual. Then, to any element
$g\in G(L)$ we associate the $\sigma$-$F$-crystal of $\oo_B$-modules $(\La_{\oo_L},\Phi)$, where
$\La_{\oo_L}=\La\otimes_{\oo_F} \oo_L$ and
$\Phi=g^{-1}b\sigma(g)\circ (id_\La\otimes \sigma)$. It is naturally
endowed with a faithful action of $\oo_B$ and in the case of
$G=GSp_B(V)$  also a polarization. The isomorphism class of
$(\La_{\oo_L},\Phi)$ is determined by the image of $g$ in
$G(L)/K$, while its isogeny class (i.e. the associated isocrystal)
only depends on $b$. For $g\in X_\mu(b)$, the corresponding module
has \HP $\mu$, and for $\mu$ minuscule it is the
crystal of a \BT $\oo_B$-module.

\begin{thm}\label{thmkottwitz}\label{thm1}
Let $\mu_M\in X_*(T)$ be $M$-dominant and let $b\in M(L)$ such that
$\kappa_M(b)= \mu_M$ and $\nu\preceq_M\mu_M$. Then
$X^M_{\mu_M}(b)\neq\emptyset$. Let $\mu_G$ be the $G$-dominant
element in the Weyl group orbit of $\mu_M$. If $\mu_M=\mu_G$ and $\kappa_M(b)\in Y_M^+$, then the natural
inclusion $X^M_{\mu_M}(b)\hookrightarrow X^G_{\mu_G}(b)$ is a
bijection.
\end{thm}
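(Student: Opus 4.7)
The plan is to handle the two assertions of the theorem separately. For the non-emptiness of $X^M_{\mu_M}(b)$, observe that $\mu_M$ is $M$-dominant, $\kappa_M(b)$ agrees with the image of $\mu_M$ in $Y_M$, and the Mazur-type inequality $\nu \preceq_M \mu_M$ holds by hypothesis. These are precisely the conditions of the non-emptiness criterion for affine Deligne-Lusztig sets (Kottwitz--Rapoport / Gashi), applied with $G$ replaced by $M$; this yields a class in $X^M_{\mu_M}(b)$.

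For the second assertion, it is first straightforward to check that the map is well-defined and injective. Since $\mu_M$ equals the $G$-dominant $\mu_G$, the double coset $K_M \varepsilon^{\mu_M} K_M$ lies inside $K \varepsilon^{\mu_G} K$, so the natural map $M(L)/K_M \to G(L)/K$ carries $X^M_{\mu_M}(b)$ into $X^G_{\mu_G}(b)$. Injectivity follows from $M(L) \cap K = K_M$, which holds because $M$ is a closed $\oo_F$-subgroup scheme of $G$. The remaining content, and the main obstacle, is surjectivity.

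For surjectivity, start with $g \in G(L)$ representing a class in $X^G_{\mu_G}(b)$. By the Iwasawa decomposition $G(L) = M(L)\, N(L)\, K$ I may take $g = mn$ with $m \in M(L)$ and $n \in N(L)$. Then
\[
g^{-1} b \sigma(g) \;=\; n^{-1}\, b_M\, \sigma(n), \qquad b_M := m^{-1} b \sigma(m) \in M(L),
\]
so I must show that after a further $K$-adjustment I can arrange $n \in K \cap N(L)$, at which point $b_M \in K_M \varepsilon^{\mu_M} K_M$ will follow by projecting the inclusion $P(L) \cap K \varepsilon^{\mu_G} K \subseteq K_M \varepsilon^{\mu_M} K_M \cdot (K \cap N(L))$ (valid because $\mu_M = \mu_G$ is already dominant in $G$) onto the Levi factor.

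The heart of the proof, and the point where the genericity hypothesis $\kappa_M(b) \in Y_M^+$ is indispensable, is this absorption of $n$ into $K$. The condition $\kappa_M(b) \in Y_M^+$ says $\langle \kappa_M(b), \alpha \rangle > 0$ for every root $\alpha$ of $A_P$ in $N$, and this means that the map $n \mapsto b_M^{-1} n^{-1} b_M\, \sigma(n)$ strictly contracts $N(L)$ with respect to the filtration by root-subgroup valuations. A standard successive-approximation procedure then produces $n' \in N(L)$ with $n' \equiv n$ modulo $K$ and $n'^{-1} b_M \sigma(n') \in K \cap N(L)$, so that the class of $g$ has a representative in $M(L)$ already mapping into $K_M \varepsilon^{\mu_M} K_M$ under $b$-twisted $\sigma$-conjugation. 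Making this contraction argument precise, rather than merely reading it off the Newton polygon gap, is the technical crux; it is the step adapted, essentially unchanged, from Kottwitz's proof in \cite{KO5}.
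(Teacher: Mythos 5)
Your architecture --- converse to Mazur's inequality for $M$ for the non-emptiness, then Iwasawa decomposition $G(L)=M(L)N(L)K$ plus a contraction argument on $N(L)$ driven by $\kappa_M(b)\in Y_M^+$ for the bijectivity --- is exactly that of Kottwitz's proof in \cite{KO5}, which is also all the paper does: its published proof consists of citing \cite{KO5}, Theorem 4.1 and checking that the two extra hypotheses there (mixed characteristic, and $b$ basic in $M$) can be dropped, the latter because basicness enters only through the containment $M\supseteq M_b$, which already follows from $\nu\in\Aa_P^+$. So in outline you are reconstructing the very argument the paper defers to.

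There is, however, one step that is false as written. The inclusion $P(L)\cap K\varepsilon^{\mu_G}K\subseteq K_M\varepsilon^{\mu_M}K_M\cdot(K\cap N(L))$ does not hold, even when $\mu_M=\mu_G$. Take $G=GL_2$, $M=T$ the diagonal torus, $N$ the unipotent radical of the Borel containing it, and $\mu$ the dominant cocharacter with elementary divisors $(1,p)$. The element $\left(\begin{smallmatrix} p & 1 \\ 0 & 1 \end{smallmatrix}\right)$ lies in $P(L)\cap K\varepsilon^{\mu}K$, but its (unique) Levi--unipotent decomposition is $\mathrm{diag}(p,1)\cdot\left(\begin{smallmatrix} 1 & p^{-1} \\ 0 & 1 \end{smallmatrix}\right)$: the unipotent factor is not in $K$, so the element is not in $M(L)\cdot(K\cap N(L))$ at all, and moreover $\mathrm{diag}(p,1)$ is not in the "dominant" coset $K_T\varepsilon^{\mu}K_T$. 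What is true, and what you actually need, is weaker and requires the hypothesis you never invoke in the bijectivity part: after the $N$-part has been absorbed into $K$, one only gets $b_M\in M(L)\cap K\varepsilon^{\mu_G}K$, hence $b_M\in K_M\varepsilon^{\lambda}K_M$ for \emph{some} $M$-dominant $\lambda$ in the Weyl orbit of $\mu_G$; to conclude $\lambda=\mu_M$ one must use $\kappa_M(b_M)=\kappa_M(b)=\mu_M$, since $\mu_G-\lambda$ is then a non-negative combination of positive coroots with trivial image in $Y_M$, which forces it to vanish. The fact that $\kappa_M(b)=\mu_M$ appears nowhere in your surjectivity argument is the symptom of this gap. (The contraction/successive-approximation step is only gestured at, but you correctly flag it as the part imported from \cite{KO5}; the missing $\kappa_M$ argument is the piece that is actually wrong rather than merely deferred.)
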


For $F$-crystals without additional structures, i.e. for $GL_n$ and
$GSp_{2n}$, this is Katz's Theorem 1.6.1. in \cite{KZ}. If $F$ is an extension of $\mathbb{Q}_p$ and if $b$ is basic in $M$, this is shown by Kottwitz in \cite{KO5}. If $G$ is split, Theorem \ref{thm1} is the same as \cite{conncomp}, Thm. 1.

Applying it to the contravariant Dieudonn\'{e} module of \BT $\oo_B$-modules, this theorem yields the existence statement of the following corollary, see Section \ref{sec3}. The general assertion of the corollary is a special case of Corollary \ref{cor10}.
\begin{cor}\label{cor1}
Let $(H,\iota)$ be a \BT $\oo_B$-module over a perfect
field of characteristic $p$, with \NP $\nu$ and \SHP
$\overline{\mu}$. Let $(x_1,x_2)=x\in\mathbb{Z}^2$ be on $\nu$. If
$x$ is a breakpoint of $\nu$ and $x$ lies on $\overline{\mu}$, then
$(H,\iota)$ has a unique \HN decomposition associated to $x$.
\end{cor}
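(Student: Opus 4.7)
The plan is to translate the corollary into the group-theoretic framework of Theorem \ref{thmkottwitz}, applied to the contravariant Dieudonn\'e module $(\MM,F,\iota)$ of $(H,\iota)$. By the remark following Definition 2, one may assume $B=D$ is an unramified extension of $\mathbb{Q}_p$. Set $V=\MM[1/p]$ and take $G=\Res_{D/\mathbb{Q}_p}GL_D(V)$, $K=G(\oo_L)$, the stabilizer of $\La=\MM\otimes_{\oo_F}\oo_L$. Writing $F=b\cdot(\mathrm{id}\otimes\sigma)$ identifies the isogeny class of $(H,\iota)$ with the $\sigma$-conjugacy class of $b\in G(L)$, while $\La$ corresponds to the coset $1\cdot K\in X_\mu^G(b)$, where $\mu\in X_*(T)$ is the $G$-dominant Hodge cocharacter.

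The breakpoint $x$ determines the Newton slope decomposition $V_L=W_1\oplus W_2$; I would take $P=MN$ to be the parabolic of $G$ stabilizing the flag $0\subset W_1\subset V_L$, with Levi $M$. Using that the Newton filtration of the isocrystal splits canonically over $L$, after $\sigma$-conjugating within $G(L)$ one may assume $b\in M(L)$. The next step is to verify the three hypotheses of Theorem \ref{thmkottwitz}: $\mu_M=\mu_G$, $\kappa_M(b)=\mu_M$, and $\kappa_M(b)\in Y_M^+$. The hypothesis that $x$ lies on $\unu$ encodes the first two: combinatorially, it says the $\sigma$-averaged Hodge polygon passes through $x$ at integer coordinates, which forces the $G$-dominant $\mu$ to split compatibly with $M$ into $(\mu_1,\mu_2)$ with partial-sum endpoints matching $\nu_i$ (giving $\kappa_M(b)=\mu_M$). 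The breakpoint hypothesis---strict inequality of slopes of $\nu_1$ versus $\nu_2$---directly gives $\kappa_M(b)\in Y_M^+$, since the simple roots of $A_P$ in $N$ pair positively with $\kappa_M(b)$ exactly under this strict inequality.

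Theorem \ref{thmkottwitz} then yields $X_\mu^G(b)=X_{\mu_M}^M(b)$. Since $1\cdot K\in X_\mu^G(b)$, it must lie in $X_{\mu_M}^M(b)$, which translates directly into the decomposition $\La=(\La\cap W_1)\oplus(\La\cap W_2)$ compatible with $F$ and $\iota$---the desired \HN decomposition of $(\MM,F,\iota)$, and hence of $(H,\iota)$. Uniqueness is automatic: any \HN subcrystal $\MM_1\subset\MM$ must satisfy $\MM_1[1/p]=W_1$ by the strict slope inequality, forcing $\MM_1=\MM\cap W_1$. I expect the main obstacle to be the combinatorial verification in the middle step: in the restriction-of-scalars setting $\mu$ need not itself be $\sigma$-invariant, and one must carefully check that ``$x\in\unu$'' is the precise integer-level condition translating simultaneously into $\mu_M=\mu_G$ and $\kappa_M(b)=\mu_M$ across all embeddings of $D$.
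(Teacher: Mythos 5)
Your proposal follows the paper's intended route for the \emph{existence} part over an algebraically closed field: translate the Dieudonn\'e module into the group-theoretic framework of Theorem \ref{thmkottwitz} with $G=\Res_{B/\mathbb{Q}_p}(GL_d)$, choose the maximal parabolic $P=MN$ determined by the breakpoint, and verify the hypotheses. This is essentially what Section \ref{sec3} does. Your direct uniqueness argument (any \HN subcrystal must have $M_1[1/p]=W_1$ by strict slope separation, hence $M_1=M\cap W_1$) is also correct, and arguably more elementary than the paper's route (the paper deduces the general assertion, including uniqueness and the extension to arbitrary perfect fields, as a special case of Corollary \ref{cor10}, i.e.\ via Theorem \ref{thm2}, which is proved by a reduction to Katz's theorem rather than through Theorem \ref{thmkottwitz}).

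Two remarks on the gaps you flagged or left open. First, the worry about simultaneously arranging $\mu_M=\mu_G$ and $\kappa_M(b)=\mu_M$ is not actually an obstacle: one simply takes $\mu_M=\mu_G$, the $G$-dominant \Ho cocharacter of the crystal, which is automatically $M$-dominant (the $M$-positive roots are a subset of the $G$-positive roots), so $\mu_M=\mu_G$ holds by fiat. The only content is then $\kappa_M(b)=\mu_M$ in $Y_M$. Passing to the $\sigma$-coinvariants $Y_G,Y_M$, the image of $\mu$ is the $r$-reduction $\unu'$ and $\kappa_M(b)$ is governed by the partial sums of $\nu'$; the equality $\kappa_M(b)=\mu_M$ in $Y_M\cong\mathbb{Z}^2$ is exactly the statement that $\nu'$ and $\unu'$ pass through the same point at abscissa $j$, which is the $r$-reduced form of ``$x$ lies on $\nu$ and on $\unu$''. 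The breakpoint condition gives $\kappa_M(b)\in Y_M^+$ as you say. This is precisely the calculation carried out in Section \ref{sec3}. Second, your argument takes place entirely over $L$, i.e.\ over $W(\overline{\mathbb{F}}_p)$, but the corollary is stated over an arbitrary perfect field $k$. You should close the gap by a descent step: the \HN decomposition of the base change of the Dieudonn\'e module to $W(\overline{k})$ is unique, hence ${\rm Gal}(\overline{k}|k)$-equivariant, and therefore descends to $W(k)$. (This is the argument the paper uses in the proof of Theorem \ref{thm2}.) With these two points filled in, your proof is complete and parallels the paper's for existence, while offering a more self-contained argument for uniqueness.
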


In the classical context of $p$-divisible groups without additional
structures, the \HN decomposition coincides with the
multiplicative-bilocal-\'etale decomposition. Indeed, this is a simple consequence of the fact that
the slopes of the \NP of a \BT group are bounded by $0$ and $1$,
while the slopes of its Hodge polygon are always either $0$ or $1$. Its existence and
uniqueness are shown by Messing in \cite{ME}.

\subsection{The \HN filtration for families in characteristic $p$.}
This result on the existence of a \HN filtration for families of
$\oo_B$-modules over a smooth scheme of positive characteristic is a generalization of Katz's Theorem 2.4.2 in \cite{KZ}.

\begin{thm}\label{thm2}
Let $S=\Spf(A)$ be a formal scheme as in Definition \ref{def1}. Let $(M,\nabla, F,\iota)$ be a $\sigma$-$F$-crystal of $\oo_B$-modules over $S$ with \NP $\nu$ and \SHP $\overline{\mu}$. Let $(x_1,x_2)=x\in\mathbb{Z}^2$ be a breakpoint of $\nu$ lying on $\unu$ at
every geometric point of $S$. Then there is a \HN filtration of
$(M,\nabla, F,\iota)$ in $x$ and it is unique.

Furthermore, if $S=Spec (A)$ for $A$ a perfect
$\mathbb{F}_p$-algebra, then the \HN filtration admits a unique splitting.
\end{thm}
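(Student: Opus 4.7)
The plan is to reduce via the Morita/simple-algebra argument of the introduction to the case $B=D$, an unramified finite extension of $\mathbb{Q}_p$, and then to follow Katz's strategy (\cite{KZ}, Theorem 2.4.2) while carrying along the $\oo_B$-action. The pointwise \HN decomposition at every geometric point of $S$ is already available: apply Theorem \ref{thm1} to $G=\Res_{D/\mathbb{Q}_p}GL_n$ with the standard Levi associated to the breakpoint $x$. Uniqueness of the filtration is an immediate consequence, since two candidate subcrystals $M_1,M_1'\subset M$ must coincide on every geometric fiber with the first step of the pointwise \HN decomposition, and locally direct summands of $M$ agreeing on all geometric fibers must be equal.

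For existence, one works \'etale-locally on $S=\Spf(A)$. Pick $s\in\mathbb{Q}$ strictly between the last slope of $\nu_1$ and the first slope of $\nu_2$. Following Katz, $M_1\subset M$ is constructed as the ``slope-$\le s$'' subcrystal, defined via a condition of the form $F^{nN}(m)\in p^{\lceil nNs\rceil}M$ for all $n\geq 0$ and $N$ sufficiently large, suitably saturated to guarantee the correct generic rank $x_1$. The crucial hypothesis that $x$ lies on $\unu$ at every geometric point, combined with constancy of $\nu$, forces the Mazur-type inequality $\nu\preceq\unu$ to degenerate to an equality on the first $x_1$ slopes; this in turn implies that both $M_1$ and $M/M_1$ are locally free, of the expected ranks, and that their \NPs and \SHPs are $\nu_i$ and $\unu_i$ respectively. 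The $\oo_B$-action preserves $M_1$ because the defining condition is $\oo_B$-equivariant, and $\nabla$-stability of $M_1$ follows from Katz's argument applied essentially unchanged in the presence of $\iota$.

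For the splitting assertion when $A$ is perfect, the connection is intrinsic to $(M,F,\iota)$ and may be ignored. Since the slopes of $M_1$ are strictly smaller than those of $M/M_1$, conjugation by $F$ is a $p$-adic contraction on $\mathrm{Hom}_{\tilde{A},\oo_B}(M/M_1,M_1)$. Starting from any $\tilde{A}$-linear, $\oo_B$-linear set-theoretic section $s_0:M/M_1\to M$ of the quotient map, the sequence $F^{n}\circ s_0\circ F^{-n}$ converges $p$-adically to the unique splitting compatible with $F$ and $\iota$; applying the same contraction to the difference of two splittings yields uniqueness.

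The main technical obstacle is the construction of $M_1$ in the non-perfect family case: there is no canonical Frobenius lift on $\tilde{A}$ in general, so the slope filtration must be executed by reducing to Artinian local quotients of $A$, lifting step by step, and then checking that the resulting submodule is independent of the chosen Frobenius lift and is $\nabla$-stable. This is the Katz-style argument from \cite{KZ}, Sections 2.3--2.4; propagating the $\oo_B$-action through it is essentially formal, since at each stage the pieces of the construction are manifestly $\oo_B$-equivariant.
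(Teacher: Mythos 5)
There is a genuine gap in the existence step. You propose to run Katz's slope construction directly on $(M,\nabla,F,\iota)$, and you justify the local freeness and correct ranks of $M_1$ and $M/M_1$ by appealing to the hypothesis that $x$ lies on $\unu$ at every geometric point. But Katz's theorem (both the decomposition over a perfect field and Theorem 2.4.2 in families) requires the breakpoint of the Newton polygon to lie on the \emph{Hodge polygon} $\mu$ of $F$, not on the $\sigma$-invariant Hodge polygon $\unu$. Since $\nu\preceq\unu\preceq\mu$ and in general $\unu\neq\mu$ when the $\oo_B$-action is nontrivial, a contact point of $\nu$ and $\unu$ is typically \emph{not} a contact point of $\nu$ and $\mu$, so the divisibility estimates that make Katz's ``slope $\le s$'' submodule a locally free direct summand of the expected rank are simply not available for $(M,F)$ itself. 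Your assertion that the hypothesis ``forces the Mazur-type inequality to degenerate to an equality on the first $x_1$ slopes'' also does not follow: lying on $\unu$ at the single point $x$ does not give $\nu_1=\unu_1$, and in any case what is needed is contact with $\mu$.

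The missing idea is the one the paper's proof is built on: decompose $M=\oplus_{i}M^{(i)}$ into the isotypic summands for the $\oo_B$-action, pass to the $\sigma^r$-$F$-crystals $(M^{(i)},\phi_i)$ with $\phi_i=F^r|_{M^{(i)}}$, and use the comparison $\nu'\preceq\mu^{(i)}\preceq\unu'$ (Lemma \ref{lemcompmunu}) to conclude that the breakpoint of $\nu'$ lying on $\unu'$ automatically lies on each Hodge polygon $\mu^{(i)}$. Only then does Katz apply, to each summand separately; one recovers the $\oo_B$- and $F$-stability of $\oplus_i M_1^{(i)}$ from the uniqueness of each filtration and the fact that the slopes of $M_1^{(i)}$ and $M^{(i)}/M_1^{(i)}$ are disjoint. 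Your uniqueness argument, the descent from $\overline{k}$ to $k$, and the contraction-mapping splitting over a perfect ring are all fine and parallel to what the paper does (via Katz), but without the isotypic reduction the construction of $M_1$ does not go through.
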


Theorem \ref{thm2} applied to the crystal of a \BT $\oo_B$-module yields the following corollary.

\begin{cor}\label{cor10}
Let $(H,\iota)$ be a \BT $\oo_B$-module over a formal scheme $S$ which is locally of the form $\Spf (A)$ for some $A$ as in Definition \ref{def1}. Let $\nu$ be the \NP of $(H,\iota)$ and $\unu$ its \SHP. Assume that there is a breakpoint $(x_1,x_2)=x\in\mathbb{Z}^2$ of $\nu$ lying on $\unu$ for every geometric point of $S$. Then there is a unique \HN filtration of
$(H,\iota)$ in $x$. If $S=\Spec (A)$ for $A$ a perfect
$\mathbb{F}_p$-algebra, then the \HN filtration admits a unique splitting.
\end{cor}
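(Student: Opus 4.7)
The plan is to deduce Corollary \ref{cor10} from Theorem \ref{thm2} by passing to the contravariant Dieudonn\'e crystal $(M,\nabla,F,\iota)$ attached to $\uh$; this is a $\sigma$-$F$-crystal of $\oo_B$-modules over $S$ with Newton polygon $\nu$ and $\sigma$-invariant \HP $\unu$. Since the statements are local on $S$ and the \HN filtration is characterized by its invariants, it suffices to treat the case $S=\Spf(A)$ with $A$ as in Definition \ref{def1}, the general case following by gluing using uniqueness. Applying Theorem \ref{thm2} to $(M,\nabla,F,\iota)$ then produces a unique \HN filtration $M_1\subset M$ of $\sigma$-$F$-crystals of $\oo_B$-modules at $x$, together with a unique splitting $M=M_1\oplus M_2$ when $A$ is perfect.

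To invoke de Jong's equivalence (\cite{dJ}, Main Theorem 1) and promote this filtration to the level of \BT $\oo_B$-modules, one must check that both $M_1$ and $M/M_1$ are Dieudonn\'e crystals of \BT groups, i.e.\ that their \HPs have slopes in $\{0,1\}$ at every geometric point of $S$. This can be verified fiberwise. At a geometric point $s$ of $S$, Corollary \ref{cor1} supplies a genuine \HN decomposition $(H_s,\iota_s)=(H_{1,s},\iota_{1,s})\oplus(H_{2,s},\iota_{2,s})$ as \BT $\oo_B$-modules, and passage to contravariant Dieudonn\'e modules gives a direct sum decomposition of $M_s$ whose summands have Newton polygons $\nu_1,\nu_2$ and $\sigma$-invariant Hodge polygons $\unu_1,\unu_2$. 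By the uniqueness part of Theorem \ref{thm2} this fiberwise decomposition must coincide with the fiber of $M_1\subset M$, so $(M_1)_s$ and $(M/M_1)_s$ are Dieudonn\'e modules of \BT $\oo_B$-modules and in particular have \HP slopes in $\{0,1\}$.

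Having verified the \BT condition, de Jong's equivalence translates $M_1\subset M$ into the desired \HN filtration of $\uh$ at $x$, and functoriality combined with the uniqueness in Theorem \ref{thm2} transports uniqueness to the level of \BT $\oo_B$-modules; when $A$ is perfect, the splitting $M=M_1\oplus M_2$ corresponds under de Jong's equivalence to a splitting of the \BT filtration. The main obstacle in this plan is precisely the fiberwise check that the \HPs of $M_1$ and $M/M_1$ have slopes in $\{0,1\}$, since the hypotheses a priori only control $\unu_1$ and $\unu_2$, which can have arbitrary slopes in $[0,1]$. The resolution goes through the uniqueness in Theorem \ref{thm2} combined with Corollary \ref{cor1}, which supplies at each geometric point a decomposition by honest \BT $\oo_B$-modules matching the crystalline filtration.
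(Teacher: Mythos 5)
Your proposal follows the paper's route exactly: the paper derives Corollary \ref{cor10} in one line by applying Theorem \ref{thm2} to the Dieudonn\'e crystal of $(H,\iota)$ and translating back via de Jong's equivalence, which is precisely your argument. Your additional fiberwise verification that $M_1$ and $M/M_1$ are again Dieudonn\'e crystals of \BT $\oo_B$-modules (a point the paper leaves implicit) is sound, though it can be seen more directly: the largest slope of $\unu_1$ equals the largest Hodge slope of $M_1$ at each geometric point, so the hypothesis that $\unu$ has slopes in $[0,1]$ already forces the Hodge slopes of $M_1$ and $M/M_1$ to lie in $\{0,1\}$ without appealing to Corollary \ref{cor1}.
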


For \BT groups (without additional structures) the \HN
filtration coincides with the multiplicative-bilocal-\'etale
filtration. Therefore, in this context, Katz's result was also known at the
time due to the work of Messing in \cite{ME}.

\subsection{The \HN filtration for deformations of \BT $\oo_B$-modules to characteristic 0.}

Let $k$ be a perfect field of positive characteristic $p$ and $R$ an
artinian local ring with residue field $k$. By {\em a deformation}
of a \BT $\oo_B$-module $(H,\iota)$ over $k$ to $R$ we
mean a \BT $\oo_B$-module $(\hh,\iota)$ over $R$
together with an isomorphism $j:(H,\iota)\ra (\hh,\iota)\times_{\Spec(R)}\Spec (k)$.

\begin{thm}\label{thm3}
Let $R$ be an artinian local $\zz_p$-algebra of residue field $k$ of
characteristic $p$. Let $(\hh,\iota)$ be a \BT $\oo_B$-module over
$R$ and let $H$ be its reduction over $k$. Let $\nu$ and $\overline{\mu}$ be the \NP and the \SHP of $(H,\iota)$. Assume that there is a
\HN decomposition of $H$ associated to a breakpoint
$x\in\mathbb{Z}^2$ of $\nu$.
Let $H_2\subset H$ be the induced filtration. If $\nu_1=\unu_1$, then the filtration of
$(H,\iota)$ lifts (in a unique way) to a \HN filtration of $\hh$ in
$x$.
\end{thm}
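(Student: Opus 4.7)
The plan is to lift the filtration by induction along a chain of square-zero thickenings $R = R_n \twoheadrightarrow R_{n-1} \twoheadrightarrow \cdots \twoheadrightarrow R_0 = k$ of artinian local $\zz_p$-algebras. Writing $\hh^{(i)} = \hh \times_R R_i$, at step $i$ we assume inductively the existence of a \HN filtration $\hh_2^{(i)} \subset \hh^{(i)}$ in $x$, and the task is to extend it to $R_{i+1}$. Uniqueness of the extension is automatic: two extensions would differ by an element of $\text{Hom}(H_2, H_1) \otimes I$, where $I = \ker(R_{i+1} \to R_i)$ and $H_1 = H/H_2$, and this group vanishes because the Newton slope gap at the breakpoint $x$ rules out nonzero $F$-isocrystal morphisms from $H_2$ to $H_1$.

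For existence I would translate the problem via contravariant Dieudonn\'e theory: producing $\hh_2^{(i+1)} \subset \hh^{(i+1)}$ is equivalent to producing a sub-$F$-crystal of $\oo_B$-modules $D(\hh_1^{(i+1)}) \subset D(\hh^{(i+1)})$ lifting $D(\hh_1^{(i)}) \subset D(\hh^{(i)})$, together with the datum that the Hodge filtration of $\hh^{(i+1)}$ restricts to a direct summand of $D(\hh_1^{(i+1)})$ of the correct rank, so that Grothendieck-Messing produces a sub-\BT $\oo_B$-module with the desired quotient.

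The sub-$F$-crystal lift exists by a standard infinitesimal argument, the obstruction being killed by the Newton slope gap (the same vanishing of $\text{Hom}(H_2,H_1)\otimes I$ used for uniqueness). The crux of the proof is the compatibility with the Hodge filtration, and this is where the hypothesis $\nu_1 = \unu_1$ enters decisively. This equality says precisely that $H_1$ is $\mu_1$-ordinary in the sense of Moonen, and for such \BT $\oo_B$-modules the relative position of the Hodge filtration inside the Dieudonn\'e module is rigid; consequently the restriction of $\Fil^1 \hh^{(i+1)}$ to the lifted sub-crystal is forced to be a direct summand of the correct rank. One way to make this precise is to invoke the canonical lift of the $\mu_1$-ordinary $H_1$ and compare it with the actual quotient $\hh^{(i+1)}/\hh_2^{(i+1)}$; an alternative is an explicit analysis of the Hodge deformation functor of $H_1$.

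The main obstacle I anticipate is verifying this Hodge filtration compatibility. At the level of pure $F$-crystals the slope gap does almost all of the work, but Hodge filtrations genuinely depend on the $\mu$-ordinary hypothesis: absent $\nu_1 = \unu_1$ there is moduli of Hodge filtrations transverse to the Newton decomposition and the subgroup may fail to lift. Pinning down the rigidity that follows from $\mu_1$-ordinariness, presumably via the canonical lifting theory for $\mu$-ordinary \BT $\oo_B$-modules, is the main technical input of the proof.
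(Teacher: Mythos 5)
Your skeleton matches the paper's: translate via crystalline Dieudonn\'e theory, observe that uniqueness is essentially formal (the paper gets it from Grothendieck--Messing), and isolate the compatibility of the Hodge filtration with the sub-crystal as the crux, with $\nu_1=\unu_1$ as the decisive hypothesis. But the crux itself is not proved in your proposal, and the mechanism you offer for it is not correct as stated. It is not true that $\mu$-ordinariness of $H_1$ makes ``the relative position of the Hodge filtration inside the Dieudonn\'e module rigid'': $\mu$-ordinary \BT $\oo_B$-modules have non-trivial (typically positive-dimensional) deformation spaces in Moonen's theory, so no rigidity of $H_1$ by itself can force $\Fil^1(\hh)\cap D(H_1)$ to be a direct summand of the right rank. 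What is actually needed is a statement about how the Hodge filtration of the ambient deformation $\hh$ meets the sub-crystal, and this depends on how $M_1$ sits inside all of $M$, not on $H_1$ alone. The paper proves it by reducing to the universal deformation in the Faltings--Moonen coordinates, where $F_{\MM}=g_{\rm univ}\circ(F_M\otimes\varphi_A)$ with $g_{\rm univ}$ an explicit matrix supported on the block (rows in $\Fil^1$, columns in the complement $M^0$); the hypothesis $\nu_1=\unu_1$ is used to show that for each $i$ either $M_1^{(i)}\subseteq (M^0)^{(i)}$ or $(M^0)^{(i)}=0$, whence $g_{\rm univ}$ preserves $\MM_1=M_1\otimes_{W(k)}A$ and the connection restricts. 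The paper moreover remarks that without $\nu_1=\unu_1$ this block condition fails and the lifting is genuinely impossible (by comparing period domains), so the hypothesis must enter through such a quantitative computation rather than through a soft rigidity principle.

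Two smaller points. First, your uniqueness argument appeals to the vanishing of $F$-isocrystal homomorphisms from $H_2$ to $H_1$ forced by the slope gap, but the ambiguity in lifting a subgroup over a square-zero thickening is not controlled by isocrystal Homs; it is controlled by Grothendieck--Messing (the lift, if it exists, is determined by whether $\Fil^1(\hh)\cap D(H_1)$ is a summand of the correct rank, so there is nothing to choose). Second, the ``sub-$F$-crystal lift'' needs no obstruction-theoretic argument: $D(H_1)\hookrightarrow D(H)$ is a morphism of crystals, so its evaluation on the thickening $R\to k$ is canonical; the only issue is the $F$-stability and Hodge-filtration compatibility over the lift, which is again the computation above.
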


For $p$-divisible groups without $\oo_B$-module structure,
if a breakpoint of the \NP lies on the Hodge polygon, then the two
polygons necessarily share a side (of slope either $0$ or $1$).
Therefore, in that case, the condition in this theorem coincides
with the one in Theorem \ref{thm2}.

In \cite{ME} Messing establishes that the infinitesimal deformations of a \BT
group over a perfect field of characteristic $p$ are naturally
endowed with a unique filtration lifting the
multiplicative-bilocale-\'etale decomposition. In the case of \BT
groups with additional structures, the existence of a \HN filtration
for deformations was previously observed in a special case in
the work of Moonen. More precisely, in \cite{BEN}, Moonen
establishes the existence of the slope decomposition for
$\mu$-ordinary \BT $\oo_B$-modules over perfect fields and of the
slope filtration for their deformations. Since, by definition, in
these cases the slope decomposition agrees with the (finest) \HN
decomposition, Moonen's results can be regarded as special cases of the existence of the \HN decomposition and filtration
for \BT $\oo_B$-modules. For the proof of Theorem \ref{thm3},
we use a generalization of Moonen's argument via crystalline Dieudonn\'e theory.

\begin{remark}\label{rem3}
There is also a variant of a Hodge-Newton decomposition and filtration for polarized \BT $\oo_B$-modules. For this case we assume that $p$ is odd, and that $B$ is endowed with an involution $*$ that preserves $\oo_B$ (see
\cite{KO2}, Section 2). A polarized \BT $\oo_B$-module is a \BT $\oo_B$-module $(H,\iota)$ together with a polarization $\lambda:H\rightarrow H^{\vee}$ satisfying the condition $$\lambda\circ b^*=b^{\vee}\circ \lambda \text{ for all } b\in \oo_B.$$
Let $(H,\iota)$ be a polarized \BT $\oo_B$-module of dimension $n$. From the existence of the polarization
and the uniqueness of the \HN decomposition and filtration one obtains the following results.
\begin{enumerate}\item If $x=(x_1,x_2)$ is a breakpoint of the Newton point of $H$, then $x'=(2n-x_1,n-x_2+x_1)$ also is.
\item If $(H,\iota)$ has a \HN decomposition $H=H_1\oplus H_2$ associated to $x$ as in Theorem \ref{thmkottwitz}, then it also has a \HN decomposition in $x'$. We may assume $x_1\leq n$. Then there is a decomposition $H\cong H^{\vee}=H_1\oplus H'\oplus H_1^{\vee}$ of $\oo_B$-modules such that $ H_1^{\vee}\oplus H'=H_2$ and that $(H_1\oplus H')\oplus H_1^{\vee}$ is the \HN decomposition in $x'$. Here $H'$ is trivial if $x_1=n$.
\item If $(H,\iota)$ has a \HN filtration $H_2\subseteq H$ associated to $x$ as in Theorem \ref{thm2} or \ref{thm3}, then it also has a \HN filtration $H_2'\subseteq H$ in $x'$. Again we assume $x_1\leq n$. If $x_1=n$, then the two filtrations coincide. In general there is a joint filtration $H_2'\subseteq H_2\subseteq H$ of $\oo_B$-modules such that $H_2^{\vee}\cong H/H_2'$ and $(H_2')^{\vee}\cong H/H_2$ where the isomorphisms are given by the principal polarization on $H$.
\end{enumerate}
A detailed discussion of the notion of Hodge-Newton filtration for
polarized $F$-crystals without endomorphisms can be found in \cite{csima}.
\end{remark}

The paper is organized as follows. In Section \ref{sec2} we consider $\sigma$-$F$-crystals of $\oo_B$-modules with additional
structure in more detail. In Section \ref{sec3} we prove Theorem
\ref{thmkottwitz} and explain the relation between the general
result and the special case of \BT $\oo_B$-modules. The
last two sections concern the \HN filtration for families of $\sigma$-$F$-crystals of $\oo_B$-modules in
characteristic $p$ and for deformations (in the \BT case) to characteristic 0,
respectively.\\

Our results on the existence of \HN filtrations are used by the
first author in \cite{MAN}. There, the existence of a \HN filtration
allows to compare the corresponding Rapoport-Zink space with a
similar moduli space of filtered $p$-divisible groups with
endomorphisms. This leads to a proof of some cases of a conjecture
of Harris (\cite{HA}, Conjecture 5.2), namely that in the
cases when a \HN filtration exists, the $l$-adic cohomology of the
corresponding moduli space is parabolically induced from that of a
lower-dimensional one.

\subsection{Acknowledgments}
We thank L. Fargues, M. Harris and R. Kottwitz for their interest in
our work and for stimulating discussions. We are grateful to M. Rapoport and T. Wedhorn for helpful comments on a preliminary version of this paper. This project started while
the authors were visiting the Institut des Hautes \'Etudes
Scientifiques (E.M.) and the Universit\'e de Paris-Sud (E.V.). We
thank both institutions for their hospitality. During the stay at Orsay, E.V. was supported by a fellowship within the Post-Doc program of the German Academic Exchange Service (DAAD).

\section{$\sigma$-$F$-crystals with additional structures}
\label{sec2}\label{secfurther}

Let $(M,\nabla,F,\iota)$ be a $\sigma$-$F$-crystal of $\oo_B$-modules over a scheme $S$ over an algebraically closed field $k$ as in Definition \ref{def1}. Recall that we assume that $B$ is an
unramified field extension of $\mathbb{Q}_{p}$ of degree $r$.
We consider $(M,\nabla,F^r,\iota)$. One easily sees that the connection is again compatible with $F^r$ and $\iota$, thus it is a $\sigma^{r}$-$F$-crystal of $\oo_B$-modules. Let $I={\rm~Hom~}(\oo_B,W(\overline{\mathbb{F}}_p))$. Note that $M$ is a module over $\oo_B\otimes_{\mathbb{Z}_p}\tilde{A}$ which (as $k$ contains $\overline{\mathbb{F}}_{p}$) is a product of $r$ copies of $\tilde{A}$. Thus the module $M$ decomposes naturally into a direct sum associated to the characters,
$$M=\oplus_{i\in I} M^{(i)}.$$ We identify the set
$I$ with $\zz/r\zz$, with $r$ being the degree of the extension of
$k$ given by the residue field of $\oo_B$. To do so we fix $j\in I$
and consider the bijection that to each $s\in \zz/r\zz$ associates
the element $\sigma^{s}\circ j$ in $I$. As $\nabla$ and $F^r$ commute with the $\oo_B$-action, they decompose into a direct sum of connections resp. $\sigma^r$-linear morphisms on the different summands. We obtain a decomposition of the $\sigma^r$-$F$-crystal of $\oo_B$-modules $(M,\nabla,F^r,\iota)$ into $r$ summands $(M^{(i)},\nabla|_{M^{(i)}},F^r|_{M^{(i)}})$.

To compute \NPs and Hodge polygons, we assume for the remainder of this section that $(M,\nabla,F^r,\iota)$ is defined over an algebraically closed field of characteristic $p$. For crystals over a scheme, one computes the polygons at each geometric point separately.

The following notations are the same as in \cite{BEN},
Section 1.

For each $i$, the Frobenius
$F:M\ra M$ restricts to a $\sigma$-linear map $F_i:M^{(i)}\ra M^{(i+1)}$.
In particular, $F^r|_{M^{(i)}}=\phi_i$ is given by $\phi_i=F_{i+r-1}\circ F_{i+r-2}\circ \cdots \circ F_i.$
Moreover, for all $i\in I$, the morphism
$F_i:M^{(i)}\ra M^{(i+1)}$ is a semilinear morphism of
$\sigma^r$-$F$-crystals, thus the $M^{(i)}$ are all isogenous. This implies, in
particular, that they have the same Newton polygon.

The \NP of $(M,F)$ and the \NP of $(M^{(i)},\phi_i)$ can easily be
calculated from one another. Indeed, the $M^{(i)}$ are mutually isogenous summands of $(M,F^r)$, thus a slope $\lambda$ appears in the \NP of $(M,F)$ with
multiplicity $m$ if and only if the slope $r\lambda$ appears in the
\NP of each $(M^{(i)},\phi_i)$ with multiplicity $m/r$. We denote by
$\nu'$ the \NP of $(M^{(i)},\phi_i)$  and call it the $r$-reduction
of $\nu$. These calculations also show that $\nu$ is already
invariant under the Galois group (as mentioned in the introduction).

The next step is to compute the $\sigma$-invariant \HP $\unu$ more explicitly, see also \cite{Wedhorn}, 2.3. It is defined as $\frac{1}{r}\sum_{i\in \mathbb{Z}/r\mathbb{Z}}\sigma^i(\mu)$, where $\mu$ is the Hodge polygon. We consider the decomposition
$$M/FM=\oplus_i M^{(i)}/F_{i-1}M^{(i-1)}$$ and let $$d=h/r=\dim_k M^{(i)}/pM^{(i)}.$$ Let $m_i\in\mathbb{Z}^d_+$ be the relative position of $M^{(i)}$ and $F_{i-1}M^{(i-1)}$. Recall that the \NP of $(M^{(i)},\phi_i)$ is $\nu'$ for all $i$. Let $\mu^{(i)}$ be the \HP of $(M^{(i)},\phi_i)$.

\begin{remark}\label{remunu}
We can rearrange the entries of $\mu$ into $r$ blocks of length $d$ and such that the $i$th block is $m_i$ and corresponds to the contribution by $M^{(i)}$. Then $\sigma(\mu)$ is obtained from $\mu$ by replacing each block by the following one. This implies that each entry of $\unu$ occurs with a multiplicity divisible by $r$. If we use the
same formal reduction procedure to define the $r$-reduction $\unu'$ of
$\unu$ we obtain $\unu'=\sum_i m_i$. However, $\unu'$ is in general not equal to the Hodge
polygons of the $(M^{(i)},\phi_i)$ (which also do not need to be
equal), compare Lemma \ref{lemcompmunu}.
\end{remark}

Note that in the special case where $\mu$ is minuscule (for example for the crystal associated to a \BT group), $m_i$ can be written as $(1,\dotsc,1,0,\dotsc,0)$ with multiplicities $f(i)$ and $d-f(i)$. Here $$f(i)=\dim_k M^{(i)}/F_{i-1}M^{(i-1)}$$ with $0\leq f(i)\leq d,i=1,\dots ,m$. Note that $f$ need not be constant. The data $(d, f)$ is called the {\em type} of $(M,F)$. This description together with Remark \ref{remunu} implies the following result.

\begin{cor}\label{propben}
(\cite{BEN}, Lemma 1.3.4)
Let $(M,F)$ be the crystal associated to a \BT group over an algebraically closed field of characteristic $p$. The entries of $\unu'$ are $0\leq a_1\leq a_2\leq
\cdots \leq a_d$, where
$$a_j=\#\{ i\in I | \, f(i)>d-j\}.$$
In particular, the number of breakpoints of $\unu$ is equal to the
number of distinct values of the function $f$.
\end{cor}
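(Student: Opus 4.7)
The plan is to unwind the identification $\unu' = \sum_{i \in I} m_i$ from Remark \ref{remunu} in the minuscule case. Since the crystal comes from a \BT group, $\mu$ is minuscule and, as recorded just before the corollary, each $m_i$ consists of $f(i)$ copies of $1$ and $d-f(i)$ copies of $0$. I would normalize $m_i$ in the polygon convention (non-decreasing order), writing $m_i = (0^{\,d-f(i)}, 1^{\,f(i)})$, so that the $j$-th entry of $m_i$ equals $1$ iff $j > d-f(i)$.

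The first step is to observe that in this convention the sum $\sum_i m_i$ is computed entrywise and the resulting slope sequence is automatically non-decreasing, hence a convex polygon on $[0,d]$. Its $j$-th slope is
\[
a_j \;=\; \sum_{i \in I} (m_i)_j \;=\; \#\{i \in I \mid j > d-f(i)\} \;=\; \#\{i \in I \mid f(i) > d-j\},
\]
matching the formula in the statement; the monotonicity $a_1 \le \cdots \le a_d$ is immediate since the defining set grows with $j$. Combined with Remark \ref{remunu}, this proves the first assertion.

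For the ``in particular'' clause I would compute successive differences $a_{j+1} - a_j = \#\{i \in I \mid f(i) = d-j\}$, so that the slope of $\unu'$ (equivalently of $\unu$) strictly increases precisely at those $j$ for which $d-j$ is attained by $f$. Tracking these strict increases together with the boundary values $a_1$ and $a_d$ then lets one read off that the number of breakpoints of $\unu$ equals the number of distinct values of $f$.

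The main subtlety lies in the very first step: reading the formal identity $\unu' = \sum_i m_i$ of Remark \ref{remunu} as an entrywise sum of slope vectors written in the non-decreasing convention, so that the result is itself a convex polygon. Once this bookkeeping is fixed, the corollary is a short combinatorial computation with no further input beyond the minuscule structure of $\mu$.
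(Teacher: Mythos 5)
Your computation of the entries $a_j$ is exactly the argument the paper intends: it leaves the corollary unproved, citing Moonen and the sentence ``This description together with Remark \ref{remunu} implies the following result,'' and what you do is precisely to combine the identity $\unu' = \sum_i m_i$ with the minuscule form $m_i = (0^{\,d-f(i)},1^{\,f(i)})$ (written non-decreasingly, so that the entrywise sum is already dominant). The ``in particular'' step is treated as loosely in your writeup as in the paper; the successive-difference identity $a_{j+1}-a_j = \#\{i : f(i)=d-j\}$ is the right observation, but one should be aware that bookkeeping of the boundary cases (whether $0$ or $d$ lie in the image of $f$, i.e.\ whether $a_1>0$ or $a_d<r$) is what makes the count come out, and this is left implicit both in your proposal and in the source.
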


\begin{remark}
Let $(M,F)$ be a (not necessarily minuscule) $\sigma^r$-$F$-crystal over a perfect field and let
$F':M\rightarrow M$ be $\sigma$-linear. Let $\mu_F, \mu_{F'},
\mu_{F\circ F'}$ denote the Hodge polygons of $F, F',$ and $F\circ F'$. Then
$\mu_{F\circ F'}\preceq \mu_F+\mu_{F'}$. Indeed, if we denote their
slopes by $\mu_{F,i}, \mu_{F',i}, \mu_{F\circ F',i}$, we have to
show that $\sum_{i=1}^l (\mu_{F,i}+ \mu_{F',i}- \mu_{F\circ
F',i})\leq 0$ for all $l\geq 1$. Considering the $l$th exterior
power of $M$, this is equivalent to showing that the least Hodge
slope of $(\wedge^l F)\circ (\wedge^l F')$ is greater than or
equal to the sum of the least Hodge slopes of $\wedge^l F$ and
$\wedge^l F'$. But this is obvious as the least Hodge slope of one
of these morphisms is the minimal $i$ such that the image of $M$
under this morphism is not contained in $p^{i+1}M$.
\end{remark}

\begin{lemma}\label{lemcompmunu}
In the situation above we have
$$\nu'\preceq \mu^{(i)}\preceq \unu'$$ for all $i\in\{1,\dotsc,r\}$.
\end{lemma}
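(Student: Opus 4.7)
The plan is to treat the two inequalities separately. The left inequality $\nu'\preceq \mu^{(i)}$ is just Mazur's inequality (Newton above Hodge) applied to the $\sigma^r$-$F$-crystal $(M^{(i)},\phi_i)$, which we may cite as a special case of the classical theorem of Mazur--Katz; nothing in the additional structure interferes, since after restricting to $M^{(i)}$ we are simply looking at an $F$-crystal over $W(k)$ whose Newton polygon is $\nu'$ and whose Hodge polygon is $\mu^{(i)}$ by definition.

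The main content is the right inequality $\mu^{(i)}\preceq \unu'$. The key observation is that
\[
\phi_i = F_{i+r-1}\circ F_{i+r-2}\circ\cdots\circ F_i,
\]
a composition of $r$ $\sigma$-linear maps $F_j\colon M^{(j)}\to M^{(j+1)}$, and by construction the Hodge polygon of each factor $F_j$ (i.e.\ the polygon recording the elementary divisors of $F_j(M^{(j)})$ inside $M^{(j+1)}$) is precisely $m_{j+1}$. Thus the statement we need is the sub-additivity of the Hodge polygon under composition applied iteratively:
\[
\mu^{(i)} \;=\; \mu_{\phi_i} \;\preceq\; \sum_{j=0}^{r-1}\mu_{F_{i+j}} \;=\; \sum_{j=0}^{r-1} m_{i+j+1} \;=\; \sum_{k=1}^{r} m_k \;=\; \unu',
\]
where the last equality is Remark \ref{remunu}.

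The one thing that needs a small justification is that the sub-additivity stated in the Remark just above (for two $\sigma$-linear endomorphisms of a single module) carries over to $\sigma$-linear maps between possibly distinct free $W(k)$-modules of the same rank; this is the step I expect to be the only mildly subtle point. However the proof given in the Remark goes through verbatim: the least Hodge slope of a $\sigma$-linear map $G\colon N\to N'$ between free $W(k)$-modules of the same rank is intrinsically the largest integer $i$ with $G(N)\subseteq p^i N'$, and this quantity is sub-additive under composition; applying the same reasoning to $\wedge^{\ell}$ for all $\ell$ gives the polygon inequality $\mu_{G'\circ G}\preceq \mu_G+\mu_{G'}$ exactly as in the endomorphism case. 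Iterating this $r-1$ times along the chain of $F_j$'s yields the displayed inequality and finishes the proof.
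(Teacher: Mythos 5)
Your proof is correct and takes essentially the same route as the paper: Mazur's inequality for the left half, and iterated sub-additivity of Hodge polygons under composition for the right half. The only cosmetic difference is that the paper first chooses isomorphisms $M^{(i)}\cong W(k)^d$ so that each $F_i$ becomes a $\sigma$-linear \emph{endomorphism} of $W(k)^d$, allowing it to cite the preceding Remark literally as stated (for endomorphisms), whereas you observe directly that the Remark's argument carries over unchanged to $\sigma$-linear maps between possibly distinct free modules of the same rank; both are fine, and you have also been a bit more careful with the indexing (the Hodge polygon of $F_j$ is $m_{j+1}$, not $m_j$, given the paper's convention that $m_i$ records the relative position of $M^{(i)}$ and $F_{i-1}M^{(i-1)}$ -- the paper's own proof is slightly loose here, though the final sum $\sum_j m_j$ is of course unaffected).
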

\begin{proof}
For all $i\in\mathbb{Z}/r\mathbb{Z}$, we choose an isomorphism between $M^{(i)}$ and $W(k)^d$. Then each restriction $F_i:M^{(i)}\rightarrow M^{(i+1)}$ of $F$ induces a $\sigma$-linear morphism $W(k)^d\rightarrow W(k)^d$ which we also denote by $F_i$. Its Hodge polygon is $m_i$ (where we use the notation from Remark \ref{remunu}). Then $\phi_i$ is identified with $F_{i+r-1}\circ\dotsm\circ F_i$ where the indices are still taken modulo $r$. Its Newton polygon is $\nu'$ and its Hodge polygon $\mu^{(i)}$, thus we obtain the first of the two inequalities. The remark above shows that the Hodge polygon of $\phi_i=F_{i+r-1}\circ\dotsm\circ F_i$ is less or equal to the sum of the Hodge polygons of $F_{i+r-1},\dotsc, F_i$. Hence $\mu^{(i)}\preceq \sum_{j\in I} m_j=\unu'$.
\end{proof}

\begin{remark}\label{rem12}
Lemma \ref{lemcompmunu} implies that if $\nu'$ and $\unu'$ have a point in common, then
all the Hodge polygons $\mu^{(i)}$ also contain this point.
\end{remark}

\section{\HN decompositions for \BT groups with additional structure}\label{sec3}

\begin{proof}[Proof of Theorem \ref{thmkottwitz}]
Under the additional assumptions that $F$ is an extension of $\mathbb{Q}_p$ and that $b$ is basic in $M$, the theorem is \cite{KO5}, Theorem 4.1. But Kottwitz's proof still works without these additional assumptions. To leave out the assumption on the characteristic, the proof may remain unmodified. Instead of the assumption on $b$ to be basic, Kottwitz's proof only uses in the proof of \cite{KO5}, Lemma 3.2 that $M$ contains the centralizer $M_b$ of $\nu$. But this already follows from $\nu\in\Aa_P^+$.
\end{proof}

To relate this theorem to \cite{KZ}, Theorem 1.6.1, we consider the special case $G=GL_n$. Let $T$ be the diagonal torus and $P_0$ the Borel subgroup of lower triangular matrices. Then $X_*(T)\cong \zz^{n}$ and an element $\mu=(\mu_1,\dotsc,\mu_n)$ is dominant if $\mu_1\leq \dotsm\leq \mu_n.$ Let $b\in G(L)$. Then its Newton point $\nu=(\nu_i)\in X_*(T)_{\mathbb{Q}}\cong \mathbb{Q}^{n}$ is the same as the $n$-tuple of Newton slopes considered in Katz's paper. The map $\kappa$ maps $b$ to $\sum_i \nu_i=m=v_p({\rm det}~b)\in\mathbb{Z}\cong \pi_1(G)$. The Hodge polygon $\mu$ is of the form $(0,\dotsc,0,1,\dotsc,1)$ with multiplicities $h-m$ and $m$. The ordering $\preceq$ takes the following explicit form. The simple coroots of $G$ are the $e_i-e_{i-1}$ for $i=2,\dotsc,n$. Thus $\nu=(\nu_i)\preceq \mu=(\mu_i)$ if and only if $$\sum_{i=1}^l (\nu_i-\mu_i)\geq 0$$ for all $l\in \{1,\dotsc,n\}$ and with equality for $l=n$. As $G$ is split, $X=Y$. Let $P\subseteq G$ be a maximal parabolic containing $P_0$ and let $M$ be its Levi factor containing $T$. Then $M\cong GL_j\times GL_{n-j}$ for some $j$. The condition $\nu\in Y_M^+$ is equivalent to $\nu_{a}-\nu_b>0$ for all $a> j$ and $b\leq j$. Thus $j$ is a breakpoint of the associated Newton polygon considered by Katz. We have $$\pi_1(M)=\zz^n/\{(x_i)\mid \sum_{i=1}^j x_i=0=\sum_{i=j+1}^n x_i\}.$$ Hence $\kappa_M(b)=\mu$ if and only if $\sum_{i=1}^j \nu_i=\sum_{i=1}^j \mu_i$ and $\sum_{i=j+1}^n \nu_i=\sum_{i=j+1}^n \mu_i$. This means that the polygons associated to $\mu$ and $\nu$ have the point $(j,\sum_{i=1}^j \nu_i)$ in common. From these considerations we see that in this special case Theorem \ref{thmkottwitz} reduces to Katz's theorem on the Hodge-Newton decomposition.

A similar translation shows the existence statement of Corollary \ref{cor1} over an algebraically closed field. Indeed, let $G={\rm Res}_{B\mid \mathbb{Q}_p}(GL_{d})$. Then $X_*(A)_{\mathbb{Q}}$ can be written as $\{(x_{ij})\in \mathbb{Q}^h\mid 1\leq i\leq r, 1\leq j\leq d\}$ and the action of $\sigma$ by mapping $(x_{ij})$ to $(y_{ij})$ with $y_{ij}=x_{i-1,j}$. We can identify the coinvariants under this action of the Galois group with the subset of $(x_{ij})\in X_*(A)_{\mathbb{Q}}$ with $x_{ij}=x_{i+1,j}$ for all $i$ and $j$, or, using the $r$-reduction, with $\mathbb{Q}^{d}$. Thus the Newton polygon of $H$, considered as an element of $Y_G$ is identified with $\nu'$ and its Hodge polygon is identified with $\unu'\in \mathbb{Q}^{d}$. As for the split case above one easily sees that the condition $b\in Y_M^+$ for $M={\Res}_{B\mid \mathbb{Q}_p}(GL_j\times GL_{h-j})$ is equivalent to $\nu'$ having a breakpoint at $j$. Also, $\kappa_M(b)=\unu'$ translates into the condition that the two polygons coincide in this point. Thus Theorem \ref{thmkottwitz} implies the existence statement in the corollary for \BT $\oo_B$-modules over an algebraically closed field.

Similarly, for $G=GSp_{2n}$ or $\Res_{B|\mathbb{Q}_p}GSp_{2d}$, one may show that in this case,
Theorem \ref{thmkottwitz} implies the polarized variant of Katz's
Theorem or of Corollary \ref{cor1}. This is a second strategy to obtain the results explained in Remark \ref{rem3}.

\section{\HN filtrations in characteristic $p$}\label{sec4}

\begin{proof}[Proof of Theorem \ref{thm2}]

Our proof consists in reducing to Katz's result by deducing the statement
from the existence of a compatible system of \HN decompositions for the
$\sigma^r$-$F$-crystals $(M^{(i)}, \nabla|_{M^{(i)}},\phi_i)$, for all $i$. Here the $M^{(i)}$ are the same as in Section \ref{secfurther}. To be able to use them, we first consider the case that $k$ is algebraically closed and that $S$ is affine.

Let $x$ be the break-point of $\nu'$ corresponding to the breakpoint of $\nu$ in the statement. Then
it follows from Remark \ref{rem12} that $x$ also lies on $\mu^{(i)}$ for all $i$.
Therefore, by \cite{KZ}, Theorem 1.6.1, for each $i\in I$,
there is a unique filtration $(M^{(i)}_{1},\nabla|_{M^{(i)}_{1}},\phi_i|_{M^{(i)}_{1}})\subset(M^{(i)},\nabla|_{M^{(i)}},\phi_i)$ associated to $x$ as in Definition \ref{def2} and with the desired splitting property. Note that we have slightly less assumptions on $S$ than Katz, but the proof of his theorem still holds in this situation.

By construction, for all $i$, the sets of Newton slopes of
$M^{(i)}_{1}$ and $M^{(i)}/M^{(i)}_{1}$ are distinct,
therefore the map $F_i:M^{(i)}\ra M^{(i+1)}$ maps $M^{(i)}_{1}$ into $M^{(i+1)}_{1}$. This implies that $(\oplus_i M^{(i)}_{1},\nabla|_{\oplus_i M^{(i)}_{1}},F|_{\oplus_i M^{(i)}_{1}},\iota|_{\oplus_i M^{(i)}_{1}})$ inherits a natural structure of sub-$\sigma$-$F$-crystal of $\oo_B$-modules of $(M,\nabla,F,\iota)$ associated to the breakpoint of $\nu$ as in the statement.

Finally, if $k$ is not algebraically closed, we first get an analogous filtration over $S\times_{\Spec(k)}\Spec(\overline{k})$ where $\overline{k}$ is an algebraic closure of $k$. The uniqueness of the filtration implies that it is stable by the Galois group ${\rm Gal}(\overline{k}| k)$, and thus it descends to a filtration over $S$ itself. A similar argument extends the result to non-affine $S$.
\end{proof}

\section{Lifting the \HN filtration}\label{sec5}\nopagebreak
\subsection{Explicit coordinates for $\mu$-ordinary $p$-divisible groups}\label{sec51}\nopagebreak

Let $(H,\iota)$ be a \BT $\oo_B$-module over an algebraically closed field $k$ of characteristic $p$. Assume that $x=(d',n')$ is the first breakpoint of its \NP $\nu'$, that $x$ also lies on the \HP $\unu'$, and that $\nu'$ and $\unu'$ coincide up to this breakpoint. Then by Corollary \ref{cor1}, $(H,\iota)$ has a \HN decomposition in $x$, so its Dieudonn\'{e} module decomposes as $M=M_1\oplus M_2$. Here $M_1$ corresponds to the first parts of $\nu'$ and $\unu'$, thus it is $\mu$-ordinary and its polygons are constant. Let $\lambda=l/r$ with $0\leq l\leq r$ be their slope. From \cite{BEN}, 1.2.3 we obtain that $M_1$ has generators $e_{i,j}$ with $i\in\mathbb{Z}/r\mathbb{Z}$, $1\leq j \leq d'$ and such that $F(e_{i,j})=p^{a(i)}e_{i+1,j}$ where $a(i)\in \{0,1\}$ is equal to $1$ on exactly $l$ elements $i$. As the \NP and the \HP of $M$ both have slope $l/r$ up to $x$, we have that $f(i)=d$ if $a(i)=1$. (Of course, $f(i)\leq d-d'$ if $a(i)=0$.) For the whole module $M$, we may complement the coordinates $e_{i,j}$ to obtain $e_{i,j}$ with $1\leq j\leq d$ of each
  $M^{(i)}$ such that $e_{i,j}$ is in the image of $F$ if and only if $j> d-f(i)$.

\subsection{Description of the universal deformation}

We recall the description of the universal deformation via crystalline Dieudonn\'{e} theory from \cite{BEN}, 2.1., see also \cite{F}, 7. Let $A$ be a formally smooth $W(k)$-algebra with $k$ algebraically closed and $\varphi_A$ a lift of the Frobenius of $A/pA$ with $\varphi_A|_{W(k)}=\sigma$. Then \BT groups over $A$ can be described by 4-tuples $(\MM, \Fil^1(\MM),\nabla,F_{\MM})$ consisting of
\begin{itemize}
\item a free $A$-module $\MM$ of finite rank
\item a direct summand $\Fil^1(\MM)\subset \MM$
\item an integrable, topologically quasi-nilpotent connection $\nabla:\MM\rightarrow \MM\otimes \hat\Omega_{A/W(k)}$
\item a $\varphi_A$-linear endomorphism $F_{\MM}:\MM\rightarrow \MM$.
\end{itemize}

The 4-tuple corresponding to the universal deformation of a \BT $\oo_B$-module $(H,\iota)$ over $\Spec(k)$ can be described explicitly. Let $(M,F,\iota)$ be its Dieudonn\'{e} module. As in the previous section, one can find a basis $e_{i,j}$ with $1\leq j\leq d$ of each $M^{(i)}$ such that $e_{i,j}$ is in the image of $F$ if and only if $j> d-f(i)$. The Hodge filtration $\Fil^1(M)$ of $M$ is given by the $W(k)$-module generated by all $e_{i,j}$ with $j>d-f(i)$. The submodule $M^0$ generated by the $e_{i,j}$ with $j\leq d-f(i)$ is a complement. Let $A$ be the formal power series ring $$W(k)[[u^{(i)}_{l,m}\mid i\in \mathbb{Z}/r\mathbb{Z}, 1\leq m\leq d-f(i)<l\leq d]]$$ and let $g_{\rm univ}=\prod_i g^{(i)}\in GL_{\oo_B\otimes_{\mathbb{Z}_p}W(k)}(M)(A)=\prod_i GL_{d,W(k)}(A)$ be given by the $r$-tuple of matrices $g^{(i)}=(x_{a,b}^{(i)})_{1\leq a,b\leq d}$ with $$x_{a,b}^{(i)}=\begin{cases}
\delta_{a,b}&\text{if }a\leq d-f(i) \text{ or }b>d-f(i)\\
u^{(i)}_{a,b}&\text{otherwise.}
\end{cases}$$
Using this notation let $\MM=M\otimes_{W(k)}A$, $\Fil^1(\MM)=\Fil^1(M)\otimes_{W(k)}A$ and $F_{\MM}=g_{\rm univ}\circ (F_M\otimes \varphi_A).$ Then (see loc. cit.) there is a unique integrable topologically quasi-nilpotent connection $\nabla$ on $\MM$ that is compatible with $F_{\MM}$. Besides, $\oo_B$ acts by endomorphisms on $(\MM, \Fil^1(\MM),\nabla,F_{\MM})$ and this tuple corresponds to the universal deformation of the \BT $\oo_B$-module.

\begin{proof}[Proof of Theorem \ref{thm3}]

This proof is an adaptation of the corresponding proof for the $\mu$-ordinary case in \cite{BEN}, Proposition 2.1.9. Therefore we will not repeat each calculation but rather explain the differences. Again, Grothendieck-Messing deformation theory implies the uniqueness of the lifting. Besides it is enough to prove existence in the case that $k$ is algebraically closed, the generally case follows using descent and the uniqueness. Finally it is enough to prove the theorem for $\mathcal{H}$ the universal deformation of $H$.

We use the description of the universal deformation given above. Let $M_1\subset M$ be the submodule corresponding to the quotient $H_1$ of $H$. Then we have to show that $M_1$ lifts to a sub-4-tuple of $\MM$ with $\MM_1=M_1\otimes_{W(k)}A$. Let $d'=\min \{d-f(i)\mid i\leq r, f(i)<d\}$. The rank of $M_1$ is $rd'$ and $M_1=\oplus_{i}{\rm Span}\left(e_{i,j},1\leq j\leq d'\right)$. Denote by $M_1^{(i)}$ the parts of $M_1$ in the different $M^{(i)}$. From the explicit description of $M_1$ in Section \ref{sec51} we obtain that for each $i$ either $M_1^{(i)}\subseteq (M^0)^{(i)}$ or $(M^0)^{(i)}$ is trivial (the second case is equivalent to $f(i)=d$). In both cases $g^{\rm univ}$ maps $\MM_1$ to itself and hence $F_{\MM}$ restricts to $F_{\MM_1}=F_{M_1}\otimes \varphi_A$ on $\MM_1$.

It remains to prove that $\nabla(\MM_1)\subseteq \MM_1\otimes \hat\Omega_{A/W(k)}$, i.e. that the connection restricts to a connection on $\MM_1$. This calculation is the same as in Moonen's paper, see \cite{BEN},(2.1.9.1), at least under the condition $j<d'$. But this is enough as one only needs this equality for those $j$.
\end{proof}

Using duality, one easily obtains an analogous theorem in the case that $\nu$ and $\unu$ have the last breakpoint of $\nu$ in common and coincide from that point on.

Finally, we remark that the further assumption in this theorem, that
the \HP and \NP coincide up to the considered breakpoint or from this breakpoint on,
is necessary. Indeed, a first argument for this is already
visible in the proof: if this stronger condition is not satisfied,
then the key property that either $M_1^{(i)}\subseteq (M^0)^{(i)}$
or that $(M^0)^{(i)}$ is trivial for each $i$ does not hold. Thus there is some $i$ such that $g^{\rm univ}$ no longer
maps $M_1$ to itself, which makes a lifting of the filtration
impossible. Besides, the necessity of this condition can
also be seen independently of the proof by comparing the dimensions
of the corresponding period spaces.

\end{document}